\definecolor{orange}{rgb}{1,0.5,0}
\definecolor{myblue}{rgb}{0.75,0.85,1.00}
\definecolor{light-gray}{gray}{0.93}
\newcommand{\nsh}{{n\,\sharp}}
\def\Hom{\mathop{\rm Hom}\nolimits}
\newtheorem{theorem}{Theorem}[section]
\newtheorem{lemma}[theorem]{Lemma}
\newtheorem{proposition}[theorem]{Proposition}
\newtheorem{corollary}[theorem]{Corollary}
\theoremstyle{definition}
\newtheorem{example}[theorem]{Example}
\newtheorem{remark}[theorem]{Remark}
\newcommand{\G}{\Gamma}
\newcommand{\EG}{\underline{E}\Gamma}
\newcommand{\QQ}{\mathcal{Q}}
\newcommand{\ep}{\varepsilon}
\newcommand{\Z}{\mathbb{Z}}
\newcommand{\Q}{\mathbb{Q}}
\newcommand{\R}{\mathbb{R}}
\newcommand{\C}{\mathbb{C}}
\begin{document}

\title{  Quasi-representations of groups and two-homology }
\author{Marius Dadarlat}\address{MD: Department of Mathematics, Purdue University, West Lafayette, IN 47907, USA}\email{mdd@purdue.edu}	
\begin{abstract}
The Exel-Loring formula asserts that two topological invariants associated to a pair of almost commuting unitary matrices coincide. Such a pair can be viewed as a quasi-representation of $\Z^2$. We give a generalization of this formula for countable discrete groups. We also show the nontriviality of the corresponding invariants for quasidiagonal groups which are coarsely embeddable in a Hilbert space and have nonvanishing second Betti number.
\end{abstract}
\thanks{This research was partially supported by NSF grant \#DMS--1700086}
\date{\today}
\maketitle
\section{Introduction}

Kazhdan \cite{Kazhdan-epsilon} and Voiculescu \cite{Voi:unitaries} exhibited sequences of pairs of almost commuting unitaries without commuting approximants. In their proofs, Kazhdan used a winding number argument and Voiculescu used a Fredholm index argument. Another proof was given later by Loring using K-theory \cite{Loring:k(uv)-paper}.
For two unitaries $u,v\in U(n)$ such that $\|uv-vu\|$ is smaller than a positive universal constant, Loring introduced  a K-theory invariant
$k(u,v)\in \Z$ which can be described informally as follows. The pair $u,v$
gives rise to a group quasi-representation $\varphi: \Z^2 \to U(n)$ and hence to a contractive quasi-representation of $*$-algebras $\varphi: \ell^1(\Z^2)\to M_n(\C)$. Then  $k(u,v)$ is defined as the pushforward $\varphi_\sharp(\beta)$ of the Bott element, where $K_0(\ell^1(\Z^2))\cong K_0(C^*(\Z^2)) \cong \Z\oplus\Z\beta$.  The virtual rank of $\beta$ is $0$ and the first Chern class of $\beta$ is $1$.   On the other hand, Exel and Loring \cite{Exel-Loring:winding}
rediscovered Kazhdan's invariant $\omega(u,v)$ defined as the winding number
in $\mathbb{C}\setminus\{0\}$ (abbreviated $\mathrm{wn}$) of the loop $t\mapsto det((1-t)1_n+t[v,u])$ and proved the equality $k(u,v)=\omega(u,v)$, \cite{Exel-Loring:inv=}.
 Exel gave another proof of this equality using the soft torus C*-algebra, see \cite{Exel:Pacific}.
   We extended the Exel-Loring formula  to quasi-representations $\pi:\Gamma_g \to U(n)$ of surface groups of genus $g\geq 1$ in \cite{BB} and in joint work with Carri\'on \cite{Carrion-Dadarlat}
  to  quasi-representations  $\rho: \Gamma_g \to U(A)$ for $A$ a unital tracial $C^*$-algebra, see Theorems~\ref{thm:bb}, \ref{thm:cc} below. A key step in these generalizations was to realize that the Exel-Loring formula is related to an index theorem of Connes, Gromov and Moscovici   \cite{CGM:flat} and to its extension studied in \cite{BB}.


By  Hopf's formula  $$H_2(\G,\Z)={R\cap [F,F]}/{[R,F]},$$
 for the second homology of a discrete group $\G$  in terms of a free presentation
\begin{equation}\label{eqn:res}
 0 \to R \to F \stackrel{q}{\longrightarrow} \G \to 0,\quad q(a)=\bar{a},
\end{equation}
 each element $x \in H_2(\G,\Z)$ is represented by a product of commutators $\prod_{i=1}^{g} [a_i,b_i]$
 with $a_i,b_i \in F$, for some integer $g\geq 1$, such that $\prod_{i=1}^{g} [\bar{a}_i,\bar{b}_i]=1$.

Consider the (rationally injective) homomorphism
$$\beta^{\G}:  H_2(\G,\Z) \cong H_2(B\G,\Z) \to RK_0(B\G),$$
 studied in \cite{Bettaieb-Matthey-Valette}, \cite{MR1951251},  \cite{MR2041902} and define
the map $\alpha^\G : H_2(\G,\Z) \to K_0(\ell^1(\G))$ as the composition $\alpha^\G=\mu_1^\G  \circ \beta^\G$ where $\mu_1^\G$ is  the $\ell^1$-version of the assembly map of \cite{Lafforgue}:
   \[\xymatrix{
\alpha^\G :H_2(\G,\Z)\ar[r]^-{\beta^\G}& RK_0(B\G) \ar[r]^{\mu_1^\G}&K_0(\ell^1(\G)).
}
\]
We generalize the Exel-Loring formula to arbitrary discrete countable groups $\G$ as follows.
\begin{theorem}\label{thm:index00}
 Let $\G$ be a discrete countable group. Let $x\in H_2(\G,\Z)$ be represented by  a product of commutators
 $\prod_{i=1}^{g} [a_i,b_i]$ with $a_i,b_i \in F$ and $\prod_{i=1}^{g} [\bar{a}_i,\bar{b}_i]=1$.  Let $p_0$ and $p_1$ be projections
  in some matrix algebra over $\ell^1(\Gamma)$ such that
  $\alpha^\G(x) = [p_0] - [p_1]\in K_0(\ell^1(\Gamma))$.
 There exist a finite set $S \subset G$  and $\varepsilon>0$ such that if $\pi:\G \to U(n)$ is unital map  with
 $\|\pi(st)-\pi(s)\pi(t)\|<\varepsilon$ for all $s,t \in S$, then
  \begin{equation}\label{eq:index}
  \pi_\sharp( \alpha^\G(x))=\mathrm{wn}
  \det \left((1-t)1_n+t\prod_{i=1}^{g} [\pi(\bar{a}_i),\pi(\bar{b}_i)] \right) =\frac{1}{2\pi i}\mathrm{Tr}\left( \log \left(\prod_{i=1}^{g} [\pi(\bar{a}_i),\pi(\bar{b}_i)]\right)\right).
 \end{equation}
 More generally if $A$ is a unital $C^*$-algebra with a trace $\tau$ and $\pi:\G \to U(A)$ is unital map  with
 $\|\pi(st)-\pi(s)\pi(t)\|<\varepsilon$ for all $s,t \in S$, then
  \begin{equation}\label{eq:indexc}
  \tau_*(\pi_\sharp( \alpha^\G(x)))=\frac{1}{2\pi i}\tau\left( \log \left(\prod_{i=1}^{g} [\pi(\bar{a}_i),\pi(\bar{b}_i)]\right)\right).
\end{equation}
Here $\pi_\sharp( \alpha^\G(x)) = \pi_\sharp(p_0) -
  \pi_\sharp(p_1)$ where $\pi_\sharp(p_i)$ is the K-theory class of the perturbation of $(\mathrm{id}\otimes \pi )(p_i)$ to a projection via analytic functional calculus.
\end{theorem}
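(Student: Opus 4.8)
The plan is to reduce the statement to the case of surface groups, where it is furnished by Theorem~\ref{thm:bb} (matricial case) and Theorem~\ref{thm:cc} (tracial case), using Hopf's presentation of $H_2(\G,\Z)$ together with the naturality of $\alpha^{\G}$. Fix $g\geq 1$ and let $\G_g=\langle x_1,y_1,\dots,x_g,y_g\mid \prod_{i=1}^g[x_i,y_i]\rangle$ be the fundamental group of the closed orientable surface $\Sigma_g$; since $\Sigma_g$ is aspherical, $B\G_g\simeq\Sigma_g$ and $H_2(\G_g,\Z)\cong\Z$ is generated by the fundamental class $[\Sigma_g]$. The hypothesis $\prod_{i=1}^g[\bar a_i,\bar b_i]=1$ in $\G$ is precisely what makes the assignment $x_i\mapsto\bar a_i,\ y_i\mapsto\bar b_i$ extend to a group homomorphism $f\colon\G_g\to\G$, and under the Hopf isomorphism the class $x$ is identified with $f_*[\Sigma_g]$: the classifying map $Bf\colon B\G_g\to B\G$ (with $B\G_g\simeq\Sigma_g$) carries the surface fundamental class to $x$.

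Both $\beta^{(-)}$ and the $\ell^1$-assembly map $\mu_1^{(-)}$ are natural — the former because $H_2(B(-),\Z)\to RK_0(B(-))$ is natural for continuous maps, the latter for group homomorphisms — so $\alpha^{(-)}$ is natural and $\alpha^\G(x)=\alpha^\G(f_*[\Sigma_g])=\ell^1(f)_*\big(\alpha^{\G_g}([\Sigma_g])\big)$ in $K_0(\ell^1(\G))$, where $\ell^1(f)\colon\ell^1(\G_g)\to\ell^1(\G)$ is the induced $*$-homomorphism. Writing $\alpha^{\G_g}([\Sigma_g])=[q_0]-[q_1]$ with $q_0,q_1$ projections over $\ell^1(\G_g)$ having finitely supported entries, the projections $\ell^1(f)(q_0),\ell^1(f)(q_1)$ over $\ell^1(\G)$ satisfy $[\ell^1(f)(q_0)]-[\ell^1(f)(q_1)]=\alpha^\G(x)=[p_0]-[p_1]$. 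Now let $\pi\colon\G\to U(n)$ (or $\pi\colon\G\to U(A)$) be a unital map that is sufficiently multiplicative on a sufficiently large finite set $S$. Then $\rho:=\pi\circ f\colon\G_g\to U(n)$ is a quasi-representation of $\G_g$ admissible for Theorem~\ref{thm:bb} (resp.\ Theorem~\ref{thm:cc}); since $(\mathrm{id}\otimes\rho)(q_j)=(\mathrm{id}\otimes\pi)(\ell^1(f)(q_j))$ by linearity and $\rho(x_i)=\pi(\bar a_i),\ \rho(y_i)=\pi(\bar b_i)$, that theorem identifies
\[
\rho_\sharp\big(\alpha^{\G_g}([\Sigma_g])\big)=\mathrm{wn}\det\Big((1-t)1_n+t\prod_{i=1}^g[\pi(\bar a_i),\pi(\bar b_i)]\Big)
\]
(and its tracial analogue), where the left side, computed from the representatives $q_0,q_1$, coincides with $\pi_\sharp$ of the class $\alpha^\G(x)$ computed from the representatives $\ell^1(f)(q_0),\ell^1(f)(q_1)$.

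It remains to see that $\pi_\sharp$ assigns the same value to $\alpha^\G(x)$ whether computed from $\ell^1(f)(q_0),\ell^1(f)(q_1)$ or from the prescribed $p_0,p_1$; this is the step demanding care with constants. The equality of these two differences in $K_0(\ell^1(\G))$ is witnessed, after adding a common projection and enlarging the matrix size, by a norm-continuous path of projections in some $M_N(\ell^1(\G))$; its image is compact, so its entries can be approximated in $\ell^1$-norm, uniformly in the path parameter, by elements supported in a single finite set $S_0\subseteq\G$. One then takes $S$ large enough to contain $S_0$, the supports of the entries of $p_0,p_1,q_0,q_1$ and of $f$ on the generators of $\G_g$, and the products among these needed for the estimates, and takes $\varepsilon$ small enough that: $\rho$ is a good quasi-representation of $\G_g$; $\mathrm{id}\otimes\pi$ carries $p_0,p_1$ and $\ell^1(f)(q_0),\ell^1(f)(q_1)$ to elements close enough to projections for analytic functional calculus to produce the classes $\pi_\sharp(\cdot)$; and the approximated path is carried to a path of near-projections throughout, so that the two computations of $\pi_\sharp(\alpha^\G(x))$ agree. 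This is the standard mechanism making an approximate pushforward well defined on a prescribed finite set of $K$-theory data, and the fact that $f$ is a genuine homomorphism (so $\ell^1(f)$ is a $*$-homomorphism and $\pi\circ f$ an honest quasi-representation) is what lets it mesh with the only-approximately-functorial $\pi_\sharp$. I expect this bookkeeping of $S$ and $\varepsilon$ to be the principal obstacle; the analytic substance is already contained in Theorems~\ref{thm:bb} and \ref{thm:cc}.

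Finally, the two right-hand expressions in \eqref{eq:index} agree. Since $\prod_{i=1}^g[\bar a_i,\bar b_i]=1$ in $\G$, sufficient multiplicativity of $\pi$ makes $u_\pi:=\prod_{i=1}^g[\pi(\bar a_i),\pi(\bar b_i)]$ close enough to $1_n$ that $-1\notin\mathrm{spec}(u_\pi)$; as each factor is a multiplicative commutator, $\det(u_\pi)=1$, so $t\mapsto(1-t)1_n+tu_\pi$ is a path of invertibles and $t\mapsto\det\big((1-t)1_n+tu_\pi\big)$ a loop in $\C\setminus\{0\}$ based at $1$. Its winding number equals $\tfrac{1}{2\pi i}\int_0^1\mathrm{Tr}\big(((1-t)1_n+tu_\pi)^{-1}(u_\pi-1_n)\big)\,dt$, which by the elementary eigenvalue-by-eigenvalue integral for the principal branch of $\log$ near $1$ equals $\tfrac{1}{2\pi i}\mathrm{Tr}\log(u_\pi)$; the same computation with $\tau$ in place of the matrix trace, applied to the tracial conclusion of Theorem~\ref{thm:cc}, yields \eqref{eq:indexc}.
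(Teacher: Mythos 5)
Your reduction is the same as the paper's: use Hopf's formula to produce the homomorphism $f:\G_g\to\G$ with $f(\bar s_i)=\bar a_i$, $f(\bar t_i)=\bar b_i$, transport the problem to the surface group via naturality of $\alpha^{(-)}$, apply Theorem~\ref{thm:bb} (resp.\ Theorem~\ref{thm:cc}) to $\rho=\pi\circ f$, and handle the compatibility of the approximate pushforward with fixed representatives by enlarging $S$ and shrinking $\varepsilon$; your closing computation of $\mathrm{wn}\det$ versus $\frac{1}{2\pi i}\mathrm{Tr}\log$ replaces the citation of Exel's lemma but is the same fact.

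The one genuine flaw is the sign bookkeeping, which is exactly the delicate point in this argument. Theorem~\ref{thm:bb} as stated reads $\rho_\sharp(\mu^{\G_g}[\Sigma_g])=-\frac{1}{2\pi i}\mathrm{Tr}\log\bigl(\prod_i[\rho(\bar s_i),\rho(\bar t_i)]\bigr)$, with a minus sign, and by Loday's computation (cited as \cite[2.2.4]{Loday}) the Hopf class $x_g$ of $\prod_i[s_i,t_i]$ corresponds to $-[\Sigma_g]$, not $+[\Sigma_g]$, under $H_2(\G_g,\Z)\cong H_2(\Sigma_g,\Z)$; equivalently $\beta^{\G_g}(x_g)=-[\Sigma_g]_K$. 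You assert both ``$x=f_*[\Sigma_g]$'' and a signless version of Theorem~\ref{thm:bb}, each of which is off by a sign under the paper's conventions; your final formula \eqref{eq:index} comes out right only because the two errors cancel. Had you made only one of them, you would have proved the statement with the wrong sign, and this is not a hypothetical concern: the paper includes a remark that precisely such a sign was omitted in the published statement of Theorem~\ref{thm:cc} in \cite{Carrion-Dadarlat}. The correct route, as in the paper's derivation of \eqref{eq:ind-surff}, is to combine $\beta^{\G_g}(x_g)=-[\Sigma_g]_K$ with the minus sign in \eqref{eq:ind-surf} so that the two signs cancel explicitly, giving $\rho_\sharp(\alpha^{\G_g}(x_g))=\kappa\bigl(\prod_i[\rho(\bar s_i),\rho(\bar t_i)]\bigr)$, and only then apply naturality and $f_*(x_g)=x$. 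Your $(S,\varepsilon)$ bookkeeping paragraph and the identification $\rho_\sharp(\ell^1(f)\text{-representatives})=\pi_\sharp(\alpha^\G(x))$ are fine and match the paper's appeal to fixed representatives together with Remark~\ref{rem:push} and Lemma 3.3 of \cite{BB}.
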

 
 Moreover, we show in Theorem~\ref{thm:2}  that if $\G$  is a quasidiagonal group which admits a $\gamma$-element and $x\in H_2(\G,\Z)$ is not of finite order, then there are finite dimensional unitary quasi-representations $\pi:\G \to U(n)$ for which the winding number  of the closed loop
$$ t\mapsto \det \left((1-t)1_n+t\prod_{i=1}^{g} [\pi(\bar{a}_i),\pi(\bar{b}_i)] \right)$$
 from Theorem~\ref{thm:index00} is nonzero. In particular these quasi-representations are not perturbable to genuine representations,  see Corollary~\ref{cor:2}.
The proof of Theorem~\ref{thm:index00} combines results from \cite{BB},\cite{Carrion-Dadarlat} with results of Loday \cite{Loday} and Matthey \cite{MR1951251},  \cite{MR2041902}. For the proof of Theorem~\ref{thm:2} we rely on our previous paper \cite{CCC}.

 Eilers, Shulman  and  S{\o}rensen \cite{ESS-published} showed  that certain concrete groups with homogeneous relations are not matricially stable by using winding number invariants of Kazhdan/Exel-Loring type and  quasi-representations constructed ad-hoc.
Theorem~\ref{thm:index00} explains  how these invariants are connected to the two-homology of the groups and Theorem~\ref{thm:2} gives general abstract criteria for their nonvanishing.

\section{two-homology and winding numbers}
If $s,t$ are elements of a group $\G$, we denote by $[s,t]$ their commutator $sts^{-1}t^{-1}$. The commutator subgroup of $\G$, denoted $[\G,\G]$, consists of finite products of commutators.

If $\omega:[0,1] \to \C\setminus \{0\}$ is a loop, $\omega(0)=\omega(1)$, we denote by $\mathrm{wn}\,\omega(t)$ its winding number. Let $\log$ be the principal branch of the logarithm defined on $\C\setminus \{z\in \mathbb{R}\colon \, z \leq 0\}$, $\log 1 =0$.
Let $\mathrm{Tr}:M_n(\C) \to \C$ be the canonical trace with $\mathrm{Tr}(1_n)=n$.
Let $w\in SU(n)$ with $\|w-1_n\|<2$.   If $w$ is written as $w=\exp(2\pi i h)$ with $h=h^*=\frac{1}{2\pi i}\log (w)$, then $\mathrm{Tr}(h)\in \Z$ since $\det(w)=\exp(2\pi i \mathrm{Tr}(h))=1$.
Define the map $\kappa:\{w\in SU(n)\colon \|w-1\|<2\} \to \Z,$
\begin{equation}\label{eq:kappa} \kappa(w)=\frac{1}{2\pi i}\mathrm{Tr}(\log (w)).\end{equation}
The function $\kappa$ is continuous and hence locally constant as it assumes only integral values.

If $A$ is a unital
$C^*$-algebra with a trace $\tau$,
we define \mbox{$\kappa_\tau:\{w\in U(A)\colon \|w-1\|<1\} \to \R,$}
by
\begin{equation}\label{eq:kappatau} \kappa_\tau(w)=\frac{1}{2\pi i}\tau(\log (w)).\end{equation}
It is clear that if $A=M_n(\C)$ and $\tau=\mathrm{Tr}$, then $\kappa_\tau=\kappa$.
\begin{lemma}[\cite{Exel:Pacific}]
  If $w\in SU(n)$ and $\|w-1\|<2$, then $\mathrm{wn}\det\left((1-t)1_n+tw\right)=\kappa(w).$
\end{lemma}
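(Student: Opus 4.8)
The plan is to diagonalize $w$ and reduce the winding number to a sum of scalar contributions. Since $w\in SU(n)$ is unitary and $\|w-1_n\|<2$, each eigenvalue of $w$ has the form $e^{i\theta_j}$ with $|e^{i\theta_j}-1|<2$, hence $\theta_j\in(-\pi,\pi)$; in particular $-1\notin\mathrm{spec}(w)$. Write $w=u\,\diag(e^{i\theta_1},\dots,e^{i\theta_n})\,u^*$ with $u\in U(n)$. Conjugating by $u$ gives $\det\big((1-t)1_n+tw\big)=\prod_{j=1}^{n}\big((1-t)+te^{i\theta_j}\big)$. For each $j$, the point $(1-t)+te^{i\theta_j}$ lies on the chord of the unit circle from $1$ to $e^{i\theta_j}$; its imaginary part is $t\sin\theta_j$, which vanishes for $t\in[0,1]$ only at $t=0$ (where the value is $1$) unless $\theta_j=0$. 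Hence no factor meets $(-\infty,0]$, so $(1-t)1_n+tw$ is invertible for all $t\in[0,1]$ and $t\mapsto\det\big((1-t)1_n+tw\big)$ is a genuine loop in $\C\setminus\{0\}$ starting and ending at $1=\det w$.

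Next I would evaluate the winding number. The winding number of a loop which is the product of paths $f_1,\dots,f_n:[0,1]\to\C\setminus\{0\}$ equals $\frac{1}{2\pi}$ times the sum of the changes of argument of the $f_j$ (sum continuous lifts), so
\[
\mathrm{wn}\,\det\big((1-t)1_n+tw\big)=\frac{1}{2\pi}\sum_{j=1}^{n}\Delta\arg\big((1-t)+te^{i\theta_j}\big).
\]
Fix $j$ and suppose $\theta_j>0$ (the case $\theta_j<0$ is symmetric, $\theta_j=0$ trivial). By the computation above the path $t\mapsto(1-t)+te^{i\theta_j}$ lies in $\{z\ne0:\Im z\ge0\}$ and meets the real axis only at $t=0$, where it equals $1$; on this region the principal argument is continuous, single-valued, and takes values in $[0,\pi)$, so the change of argument along the path is $\arg e^{i\theta_j}-\arg 1=\theta_j$, with no spurious multiple of $2\pi$. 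Therefore $\mathrm{wn}\,\det\big((1-t)1_n+tw\big)=\frac{1}{2\pi}\sum_{j=1}^{n}\theta_j$.

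Finally I would identify this with $\kappa(w)$. Since $\mathrm{spec}(w)=\{e^{i\theta_j}\}\subset\C\setminus(-\infty,0]$ and the principal branch of $\log$ sends $e^{i\theta_j}$ to $i\theta_j$ because $\theta_j\in(-\pi,\pi)$, holomorphic functional calculus gives $\log w=u\,\diag(i\theta_1,\dots,i\theta_n)\,u^*$, whence $\mathrm{Tr}(\log w)=i\sum_{j=1}^n\theta_j$ and $\kappa(w)=\frac{1}{2\pi i}\mathrm{Tr}(\log w)=\frac{1}{2\pi}\sum_{j=1}^n\theta_j$. Comparing with the previous paragraph yields $\mathrm{wn}\,\det\big((1-t)1_n+tw\big)=\kappa(w)$. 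As a consistency check, $\det w=1$ forces $\sum_j\theta_j\in2\pi\Z$, in agreement with $\kappa$ being integer valued on $SU(n)$.

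The only step that is not purely formal is the claim $\Delta\arg\big((1-t)+te^{i\theta_j}\big)=\theta_j$ exactly, i.e. that the chord from $1$ to $e^{i\theta_j}$ turns by neither more nor less than $\theta_j$: this is precisely where the hypothesis $\|w-1_n\|<2$ (equivalently $\theta_j\ne\pi$, keeping every chord off the negative real axis) enters, and it is the main obstacle. An alternative route is to write, via Jacobi's formula for $\frac{d}{dt}\log\det$, $\mathrm{wn}\,\det\big((1-t)1_n+tw\big)=\frac{1}{2\pi i}\int_0^1\mathrm{Tr}\big((1_n+t(w-1_n))^{-1}(w-1_n)\big)\,dt$, recognize the integrand as $\frac{d}{dt}\mathrm{Tr}\big(\log(1_n+t(w-1_n))\big)$, and integrate from $0$ to $1$; but this needs the same spectral input together with the trace--derivative identity, so it is no shorter.
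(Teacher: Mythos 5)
Your proof is correct, but it takes a genuinely different route from the paper's. The paper (following Exel) avoids diagonalization entirely: writing $w=\exp(2\pi i h)$ with $h=\frac{1}{2\pi i}\log w$, it checks the norm estimate $\|(1-t)w^*+t1_n-\exp(2\pi i t h)w^*\|<1$, so the two paths are joined by a linear homotopy inside $GL(n,\C)$ with endpoints fixed; hence $\mathrm{wn}\det\left((1-t)1_n+tw\right)=\mathrm{wn}\det\left(\exp(2\pi i t h)\right)=\mathrm{wn}\exp\left(2\pi i t\,\mathrm{Tr}(h)\right)=\mathrm{Tr}(h)$. You instead diagonalize $w$, factor the determinant into the scalar chords $(1-t)+te^{i\theta_j}$, and compute each change of argument to be exactly $\theta_j$; your key geometric point --- that each chord stays off $(-\infty,0]$ because its imaginary part $t\sin\theta_j$ vanishes only at $t=0$ (where the value is $1$) --- is correct, and the identification $\kappa(w)=\frac{1}{2\pi}\sum_j\theta_j$ via functional calculus is right. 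Your argument is more explicit and needs no norm estimate, but it is tied to the finite-dimensional spectral theorem; the paper's homotopy argument is the one that transports to the setting of a general unital tracial $C^*$-algebra (where $\kappa_\tau$ lives and no diagonalization is available), which is why the paper phrases it that way. Your closing remark that the Jacobi-formula route ``is no shorter'' is fair, though it is essentially the trace-level version of the paper's homotopy computation.
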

\begin{proof}
  This is proved in \cite[Lemma 3.1]{Exel:Pacific} for a commutator $w=[u,v]$ with $u,v\in U(n)$. Let us review the argument.
  One verifies that if $h=h^*=\frac{1}{2\pi i}\log (w)$, then for all $0\leq t \leq 1$,
  $$\|\left((1-t)w^*+t1_n\right)-\exp(2\pi i th)w^*\|=\|\left((1-t)1_n+t \exp(2\pi i h) -\exp(2\pi i th)\right) \|<1.$$
  Thus the two paths $\omega_0(t)=(1-t)w^*+t1_n$ and $\omega_1(t)=\exp(2\pi i th)w^*$ are homotopic with endpoints fixed as maps into $GL(n,\C)$ via the linear homotopy $\omega_s(t)=(1-s)\omega_0(t)+s\omega_1(t)$.
  It follows that
  \[\mathrm{wn}\det\left((1-t)1_n+tw\right)=\mathrm{wn}\det(\exp(2\pi i th))=\mathrm{wn}\exp(2\pi i t\mathrm{Tr}(h)))= \mathrm{Tr}(h).\qedhere \]
\end{proof}

 \begin{lemma}[Lemma 5, \cite{Kazhdan-epsilon}]\label{lemma:Kaz}
  Let $(u_i)_{i=1}^g$, $(v_i)_{i=1}^g$,  $(u'_i)_{i=1}^g$, $(v'_i)_{i=1}^g$, be elements of $U(n)$ such that $\|\prod_{i=1}^{g} [u_i,v_i]-1_n\|<1/5g$, $\|u_i-u_i'\|<1/5g$ and  $\|v_i-v_i'\|<1/5g$ for $i=1,...,g$. Then
  \[\kappa\left(\prod_{i=1}^{g} [u_i,v_i] \right)=\kappa\left(\prod_{i=1}^{g} [u'_i,v'_i] \right).\]
  It follows that if $\kappa\left(\prod_{i=1}^{g} [u_i,v_i] \right)\neq 0$, then $\prod_{i=1}^{g} [u'_i,v'_i]\neq 1_n$.
 \end{lemma}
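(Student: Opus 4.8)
The plan is to join the two products of commutators by a continuous path that stays inside the domain $\{w\in SU(n)\colon\|w-1\|<2\}$ on which $\kappa$ is defined, and then to invoke the fact, recorded above, that $\kappa$ is continuous and $\Z$-valued, hence locally constant. First I would interpolate each unitary inside $U(n)$ along a short geodesic. Since $\|u_i-u_i'\|<1/5g$, the spectrum of the unitary $u_i^*u_i'$ avoids $-1$, so $u_i^*u_i'=\exp(ia_i)$ for some $a_i=a_i^*$ with $\mathrm{sp}(a_i)\subset(-\pi,\pi)$; set $u_i(s)=u_i\exp(isa_i)$ for $s\in[0,1]$, and define $v_i(s)$ from $v_i^*v_i'=\exp(ib_i)$ analogously. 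Then $u_i(0)=u_i$, $u_i(1)=u_i'$, and for every $s\in[0,1]$ one has $\|u_i(s)-u_i\|=\|\exp(isa_i)-1\|\le\|\exp(ia_i)-1\|=\|u_i^*u_i'-1\|=\|u_i'-u_i\|<1/5g$, using that $|e^{is\lambda}-1|=2|\sin(s\lambda/2)|$ is nondecreasing in $s\in[0,1]$ whenever $|\lambda|<\pi$; similarly $\|v_i(s)-v_i\|<1/5g$ for all $s$.

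The second step is the elementary bookkeeping that keeps the path within distance $2$ of $1_n$. For unitaries one has the telescoping inequalities $\|[a,b]-[a',b']\|\le 2\|a-a'\|+2\|b-b'\|$ (write $[a,b]=aba^*b^*$ and use $\|a^*-a'^*\|=\|a-a'\|$) and $\|\prod_{i=1}^{g}w_i-\prod_{i=1}^{g}w_i'\|\le\sum_{i=1}^{g}\|w_i-w_i'\|$. Applying these with $w_i=[u_i,v_i]$ and $w_i'=[u_i(s),v_i(s)]$ yields, for every $s\in[0,1]$,
\[
\Bigl\|\prod_{i=1}^{g}[u_i(s),v_i(s)]-\prod_{i=1}^{g}[u_i,v_i]\Bigr\|<\sum_{i=1}^{g}\Bigl(\tfrac{2}{5g}+\tfrac{2}{5g}\Bigr)=\tfrac{4}{5},
\]
so that $\bigl\|\prod_{i}[u_i(s),v_i(s)]-1_n\bigr\|<\tfrac{4}{5}+\tfrac{1}{5g}\le 1<2$. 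Since multiplicative commutators have determinant $1$, each $\prod_{i}[u_i(s),v_i(s)]$ lies in $SU(n)$, so $s\mapsto\kappa\bigl(\prod_{i}[u_i(s),v_i(s)]\bigr)$ is well defined and continuous on $[0,1]$; being $\Z$-valued it is constant, which gives $\kappa\bigl(\prod_i[u_i,v_i]\bigr)=\kappa\bigl(\prod_i[u_i',v_i']\bigr)$.

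The final assertion is then immediate: $\kappa(1_n)=\frac{1}{2\pi i}\mathrm{Tr}(\log 1_n)=0$, so $\kappa\bigl(\prod_i[u_i,v_i]\bigr)\neq 0$ forces $\prod_i[u_i',v_i']\neq 1_n$. The only point needing any care is arranging the constants so that the entire homotopy stays within the ball of radius $2$ about $1_n$, which is precisely where the hypothesis $1/5g$ enters; everything else is the local constancy of $\kappa$ combined with standard unitary perturbation estimates.
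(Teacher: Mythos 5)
Your proof is correct and follows essentially the same route as the paper: interpolate each $u_i,v_i$ to $u_i',v_i'$ along short exponential (geodesic) paths, check that the resulting path of products of commutators stays in $SU(n)$ within distance less than $1$ of $1_n$, and conclude by local constancy of the $\Z$-valued function $\kappa$. You merely make explicit the perturbation estimates (the commutator and product telescoping bounds) that the paper leaves implicit, which is fine.
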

 \begin{proof}
  Kazdan considers the continuous paths in $U(n)$
   \[u_i(t)=u_i\exp(t\log(u_i^{-1}u_i')),\,\,v_i(t)=v_i\exp(t\log(v_i^{-1}v_i')),\, i=1,...,g.\]
   Then $\|u_i(t)-1_n\|<1/5g$, $\|v_i(t)-1_n\|<1/5g$, $t\in [0,1]$. It follows that $w(t)=\prod_{i=1}^{g} [u_i(t),v_i(t)]$ is a continuous path in $SU(n)$ such that
   $w(0)=\prod_{i=1}^{g} [u_i,v_i]$, $w(1)=\prod_{i=1}^{g} [u'_i,v'_i]$ and $\|w(t)-1_n\|<1$ for all $t\in [0,1]$. One concludes that $\kappa(w(0))=\kappa(w(1))$  since $t\mapsto\kappa(w(t))\in \Z$ is continuous.
 \end{proof}
 \begin{example}
  Kazdan's and Voiculescu's examples involve the sequence of pairs of unitaries
$$
	u_n=\left(\begin{matrix}
	0 & 0 &  0  &\cdots &{1} \cr
	{1}& 0 &  0  &\cdots & 0 \cr
	0 &{1} &  0  &\cdots & 0 \cr
	&  &  &\cdots & \cr
	0 &  0&  \cdots  &{1}& 0 \cr
	\end{matrix}\right),\,\,
	v_n=\left(\begin{matrix}
	{\lambda_n} & 0 &  0  & 0 & 0\cr
	0 & {\lambda_n^2} &  0  & 0 & 0\cr
	0 &  0&  \lambda_n^3 &{\cdot}& 0\cr
 &  &  &\cdots & \cr
	0 &  0&  0 & {\cdots}& {\lambda_n^n}\cr
	\end{matrix}\right),\quad \lambda_n=e^{2\pi i/n}
 $$
 \[[u_n,v_n]=\exp(-2\pi i/n)\cdot 1_n, \quad \|[u_n,v_n]-1_n\|=|\exp(2\pi i/n)-1|<2\pi/n\]
 \[\kappa([u_n,v_n])=\kappa(\exp(-2\pi i/n)1_n)=\frac{1}{2\pi i}\mathrm{Tr}(\log (\exp(-2\pi i/n)1_n))=-1.\]
 As noted in \cite{Kazhdan-epsilon} and rediscovered in \cite{Exel-Loring:winding}, Lemma~\ref{lemma:Kaz} implies that the sequence of pairs of  unitaries $u_n$ and $v_n$ does not admit commuting approximants.
 \end{example}
\begin{remark}\label{rem:push}
 Suppose that $\{\pi_n:A \to D_n\}_n$ is a bounded asymptotic homomorphism
of unital C*-algebras. Thus $\lim_{n \to \infty} \|\pi_n(aa')-\pi_n(a)\pi_n(a')\|=0$ for all $a,a'\in A$. The sequence $\{\pi_n\}_{n}$ induces a unital $*$-homomorphism
$A \to \prod_n D_n/\bigoplus_n D_n$ and hence a group homomorphism
$K_0(A)\to \prod_n K_0(D_n)/\bigoplus_n K_0(D_n)$.
This gives a canonical way to push forward  an element $x\in K_0(A)$ to a sequence $(\pi_{\nsh }(x))_n$ with components in $K_0(D_n)$ which is well-defined up to tail equivalence: two sequences are tail equivalent,
$(y_n)\equiv (z_n)$, if there is $m$ such that $x_n=y_n$ for all $n\geq m$. Note that $\pi_{\nsh }(x+x')\equiv \pi_{\nsh }(x)+\pi_{\nsh }(x')$.
Of course, if $\pi_n$ are genuine $*$-homomorphisms then $\pi_{\nsh }(x)=\pi_{n\, * }(x)$.
One can extend these considerations to Banach algebras. Occasionally it is convenient to work with a local version of this construction. For instance, if $\pi: A \to B$ is a unital linear contraction which is almost multiplicative in the sense that $\|\pi(aa')-\pi(a)\pi(a')\|<\ep$ for $a,a'$ in a finite subset $S$ of $A$, then one can pushforward specific projections $p$ in matrices over $A$ to  projections in matrices over $B$. Assuming that $S$ is sufficiently large and $\ep$ is sufficiently small, $(\pi\otimes id)(p)$ is close to a projection (use analytic functional calculus) whose K-theory class is denoted by $\pi_\sharp(p)$. Moreover given $p$ and $q$ with $[p]=[q]\in K_0(A)$, it will follow that $\pi_\sharp(p)=\pi_\sharp(q)$ provided that $S$ is sufficiently large and $\ep$ is sufficiently small.
Using this observation, we will sometimes abuse notation and write $\pi_\sharp(x)$ for $\pi_\sharp(p)-\pi_\sharp(p')$ where $x=[p]-[p']\in K_0(A)$ and the representatives $p,p'$ are fixed.
\end{remark}

Let $\G$ be a discrete countable group with classifying space  $B\G$. If $B\G$ is written as an increasing union of  finite simplicial complexes $Y_i$, then the K-homology of $B\G$ is $RK_0(B\G)\cong \varinjlim_i K_0(Y_i)$. 
Let $\mu^\G: RK_0(B\G) \to K_0(C^*(\G))$ denote the full assembly map \cite{Kas:inv}. Let $j:\ell^1(\G) \to C^*(\G)$ be the canonical homomorphism. There is a factorization of $\mu^\G$ through its $\ell^1$-version \cite{Lafforgue}:
\begin{equation}\xymatrix{
RK_0(B\G)\ar[dr]_{\mu^\G}\ar[r]^{\mu_1^\G}& K_0(\ell^1(\G))\ar[d]^{j_*}
\\
{}  &K_0(C^*(\G))
}
\end{equation}

A unital map $\pi:\G \to U(n)$ is call a quasi-representation of $\G$.
It induces a linear contraction $\pi:\ell^1(\G) \to M_n.$
Let $S\subset \G$ be a symmetric finite subset and let $\ep>0$.
We say that $\pi$ is $(S,\ep)$-multiplicative if $\|\pi(st)-\pi(s)\pi(t)\|<\ep$ for all $s,t \in S$.
Since $S$ is symmetric we see that
$\|\pi(s^{-1})-\pi(s)^*\|< \ep$ for all $s\in S$.
One can use sufficiently multiplicative  quasi-representations $\pi$  to pushforward K-theory elements of $K_0(\ell^1(\G))$ via a partially defined map $\pi_\sharp : K_0(\ell^1(\G))\to \Z$ as discussed in Remark~\ref{rem:push}. By Lemma 3.3. of \cite{BB}, if $x,y\in K_0(\ell^1(\G))$ are such that $j_*(x)=j_*(y)\in K_0(C^*(\G))$, then
$\pi_\sharp(x)=\pi_\sharp(y)$ provided that $\pi$ is sufficiently multiplicative.

A one-relator group is a group with a presentation of the form $\langle S; r \rangle$, where $r$ is single element in the free group $F(S)$ on the countable generating set $S$. An important example is the surface group
 \[\Gamma_g=\pi_1(\Sigma_g)=\langle s_1,t_1,...,s_g,t_g\,;\, \prod_{i=1}^g [s_i,t_i]\,\rangle,\]
where  $\Sigma_g=B\G_g$ is  a connected closed orientable surface of genus $g\geq 1$. We regard $s_i,t_i$ as the generators of the free group $\mathbb{F}_{2g}$. Their images in $\Gamma_g$ are denoted by $\bar{s}_i,\bar{t_i}$,  so that $\prod_{i=1}^g [\bar{s}_i,\bar{t_i}]=1.$

Let $[\Sigma_g]_K$ denote the  fundamental class of $\Sigma_g$ in K-homology.  It is independent of the choice of the spin structure of $\Sigma_g$ and $K_0(\Sigma_g)\cong \Z \oplus \widetilde{K}_0(\Sigma_g)\cong \Z \oplus \Z[\Sigma_g]_K$.
 In \cite{BB}, we extended
the Exel-Loring formula from $\Z^2$ to all surface groups $\G_g$, $g\geq 1$ as follows:
\begin{theorem}[Thm.4.2, \cite{BB}]\label{thm:bb}
There exist a finite set $S \subset \G_g$  and $\varepsilon>0$ such that if $\rho: \Gamma_g \to U(n)$ is any $(S,\ep)$-multiplicative quasi-representation, then
\begin{equation}\label{eq:ind-surf}
\rho_\sharp( \mu^{\G_g}[\Sigma_g])=
-\frac{1}{2\pi i}\mathrm{Tr}\left( \log \left(\prod_{i=1}^{g} [\pi(\bar{s}_i),\pi(\bar{t}_i)]\right)\right)
\end{equation}
\end{theorem}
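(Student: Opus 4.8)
The plan is to realize the integer $\rho_\sharp(\mu^{\G_g}[\Sigma_g])$ as the index of a Dirac-type operator on $\Sigma_g$ twisted by an \emph{almost flat} Hermitian bundle built from $\rho$, and then to evaluate that index by the index theorem on the surface, reducing it to a first Chern number which the holonomy of the bundle identifies with $\kappa\big(\prod_{i}[\rho(\bar s_i),\rho(\bar t_i)]\big)$. This is exactly the mechanism, alluded to in the introduction, by which the Exel--Loring formula becomes an instance of the Connes--Gromov--Moscovici index theorem for almost flat bundles.

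First I would fix the standard CW structure on $\Sigma_g=B\G_g$: one $0$-cell, $2g$ $1$-cells labelled by $s_1,t_1,\dots,s_g,t_g$, and a single $2$-cell attached along the relator $r=\prod_{i=1}^g[s_i,t_i]$. Given an $(S,\ep)$-multiplicative $\rho:\G_g\to U(n)$, I would construct a rank-$n$ bundle $E_\rho$ over $\Sigma_g$ whose clutching data over the $1$-skeleton are the unitaries $\rho(\bar s_i),\rho(\bar t_i)$. Because $\rho$ is $(S,\ep)$-multiplicative with $S\supset\{\bar s_i,\bar t_i\}$ and $\ep$ small, the holonomy around the boundary of the $2$-cell is $w:=\prod_{i=1}^g[\rho(\bar s_i),\rho(\bar t_i)]$, which lies within distance $<1$ of $1_n$; one then extends $E_\rho$ across the $2$-cell along the short geodesic from $w$ to $1_n$ using $\frac{1}{2\pi i}\log w$. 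Tracking that logarithm shows that the obstruction to flatness concentrates on the $2$-cell and that the first Chern number $\langle c_1(E_\rho),[\Sigma_g]\rangle$ equals $-\kappa(w)$, the sign being fixed by the orientation of $\Sigma_g$ and the principal branch in \eqref{eq:kappa}. This is the explicit holonomy computation.

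Next I would identify $\rho_\sharp(\mu^{\G_g}[\Sigma_g])$ with the twisted index $\mathrm{Index}(D\otimes E_\rho)$, where $D$ is the spin$^c$ Dirac operator representing the fundamental class $[\Sigma_g]_K$. For genuine representations this is the classical fact that assembly followed by evaluation computes the twisted index; for an almost multiplicative $\rho$ it follows from the asymptotic-morphism/almost-flat-bundle correspondence together with Lemma 3.3 of \cite{BB}, which guarantees that $\rho_\sharp$ is well defined and factors through $K_0(C^*(\G_g))$ once $S$ is large and $\ep$ is small. Since $[\Sigma_g]_K$ lies in the reduced K-homology $\widetilde K_0(\Sigma_g)$, the rank contribution $\mathrm{rank}(E_\rho)\cdot\mathrm{Index}(D)$ drops out, and the index theorem on the surface gives $\mathrm{Index}(D\otimes E_\rho)=\langle c_1(E_\rho),[\Sigma_g]\rangle$. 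Combining with the holonomy computation yields $\rho_\sharp(\mu^{\G_g}[\Sigma_g])=-\kappa(w)=-\frac{1}{2\pi i}\mathrm{Tr}\big(\log\prod_{i=1}^g[\rho(\bar s_i),\rho(\bar t_i)]\big)$, as required.

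The hard part is the middle step: making the identification of the pushforward with the twisted index precise for maps that are only \emph{almost} multiplicative, and doing so with a single pair $(S,\ep)$ that works uniformly over all $n$ and all $(S,\ep)$-multiplicative $\rho$. One must verify that $E_\rho$ is well defined up to isomorphism, independently of the small choices in the clutching; that its K-theory class matches the pushforward of the assembled fundamental class; and that the perturbation to an honest projection described in Remark~\ref{rem:push} does not change the index. These are precisely the quantitative controlled-K-theory estimates carried out in \cite{BB}, and it is there that the required uniformity of $(S,\ep)$ is extracted.
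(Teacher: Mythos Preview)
This theorem is not proved in the present paper; it is quoted verbatim as Theorem~4.2 of \cite{BB} and used as a black box in the proof of Theorem~\ref{thm:index00}. There is therefore no ``paper's own proof'' to compare against here. Your sketch is a faithful outline of the strategy of \cite{BB} as advertised in the introduction of the present paper (the link to the Connes--Gromov--Moscovici index theorem for almost flat bundles), and you correctly identify both the holonomy/Chern-number computation and the delicate point, namely the uniform-in-$n$ identification of $\rho_\sharp(\mu^{\G_g}[\Sigma_g])$ with the twisted index for merely almost-multiplicative $\rho$. Since the present paper supplies no argument beyond the citation, there is nothing further to compare.
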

The result above was extended to quasi-representations $\rho: \Gamma_g \to U(A)$ for $A$ a unital tracial $C^*$-algebra in \cite{Carrion-Dadarlat}.
\begin{theorem}[Thm.2.3, \cite{Carrion-Dadarlat}]\label{thm:cc}
There exist a finite set $S \subset \G_g$  and $\varepsilon>0$ such that if $\rho: \Gamma_g \to U(A)$ is any $(S,\ep)$-multiplicative quasi-representation, then
\begin{equation}\label{eq:ind-surfc}
\tau_*(\rho_\sharp( \mu^{\G_g}[\Sigma_g]))=
-\frac{1}{2\pi i}\tau\left( \log \left(\prod_{i=1}^{g} [\pi(\bar{s}_i),\pi(\bar{t}_i)]\right)\right)
\end{equation}
\end{theorem}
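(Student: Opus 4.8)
The plan is to prove the tracial formula by reconstructing, with coefficients in $A$, the index-theoretic argument that underlies the matrix case of Theorem~\ref{thm:bb}, and then to pair the resulting index class with the trace $\tau$. Throughout write $U_i=\rho(\bar s_i)$, $V_i=\rho(\bar t_i)$ and $w=\prod_{i=1}^g[U_i,V_i]\in U(A)$.

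First I would fix the data making both sides meaningful. Choosing $S$ to contain the generators $\bar s_i,\bar t_i$ together with all the partial products occurring in the relator $\prod_{i=1}^g[\bar s_i,\bar t_i]$, and taking $\ep$ small, the relation $\prod_{i=1}^g[\bar s_i,\bar t_i]=1$ in $\Gamma_g$ forces $\|w-1\|<1$ (with an explicit bound of the form $C(g)\ep$). Hence $\log(w)$ is defined through the principal branch and the right-hand side $-\kappa_\tau(w)=-\frac{1}{2\pi i}\tau(\log w)$ makes sense. For such $\rho$ the pushforward $\rho_\sharp(\mu^{\G_g}[\Sigma_g])\in K_0(A)$ is well defined and independent of the chosen representative of $\mu^{\G_g}[\Sigma_g]$, by the tracial analogue of the discussion in Remark~\ref{rem:push} together with Lemma~3.3 of \cite{BB}; after applying $\tau_*$ this yields the real number on the left.

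The geometric heart of the argument is to realize $\rho_\sharp(\mu^{\G_g}[\Sigma_g])$ as a Mishchenko--Fomenko index class. Using the handle decomposition of $\Sigma_g$ (one $0$-cell, $2g$ $1$-cells, and one $2$-cell attached along the relator), I would build from $\rho$ an almost-flat bundle $\mathcal E_\rho$ of Hilbert $A$-modules over $\Sigma_g$ whose holonomy around the boundary of the $2$-cell is exactly $w$ and whose curvature is controlled by $\ep$. Exactly as in \cite{BB}, but now with $A$-module coefficients, the class $\rho_\sharp(\mu^{\G_g}[\Sigma_g])$ is identified with the index $[\mathrm{ind}\,D_{\mathcal E_\rho}]\in K_0(A)$ of the Dirac-type operator of $\Sigma_g$ twisted by $\mathcal E_\rho$; this is the point at which the Connes--Gromov--Moscovici index theorem, in the form extended in \cite{BB}, enters. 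Applying $\tau_*$ and the trace version of the Mishchenko--Fomenko index theorem computes $\tau_*(\mathrm{ind}\,D_{\mathcal E_\rho})$ as the Chern--Weil integral $\frac{1}{2\pi i}\int_{\Sigma_g}\tau(\Omega_\rho)$ of the $\tau$-curvature $\Omega_\rho$ of $\mathcal E_\rho$ (the $\hat A$-contribution vanishing on a surface); since $\mathcal E_\rho$ is flat off the $2$-cell, Stokes localizes this integral to the holonomy $w$ around the attaching loop, producing $-\frac{1}{2\pi i}\tau(\log w)$ and hence \eqref{eq:ind-surfc}.

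The main obstacle is precisely this tracial index computation, and it is where the argument genuinely departs from the matrix case. In Theorem~\ref{thm:bb} the index is an integer, so one may invoke integrality together with the local constancy of $\kappa$ (as in Lemma~\ref{lemma:Kaz}) to pin down its value and to dispose of all choices for free. For a general trace $\tau$ the invariant $\kappa_\tau$ is real-valued and is not locally constant, so no such shortcut is available; in particular one cannot reduce to Theorem~\ref{thm:bb} by approximating $\tau$ through finite-dimensional traces, since this would require a Connes-type embedding of $\pi_\tau(A)''$ that need not exist. Consequently the Chern--Weil identification must be carried out honestly: either via an explicit Mishchenko--Fomenko idempotent representative of $\mathrm{ind}\,D_{\mathcal E_\rho}$ whose $\tau$-trace is evaluated directly, or through a heat-kernel argument with $A$-coefficients, in both cases keeping quantitative control of the almost-flat curvature so that its $\tau$-integral converges to $\frac{1}{2\pi i}\tau(\log w)$. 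The same real-valued setting also demands that the independence of $\rho_\sharp$ from the triangulation and from the idempotent representative be verified without recourse to integrality.
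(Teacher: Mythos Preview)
The present paper does not prove this statement; Theorem~\ref{thm:cc} is quoted from \cite{Carrion-Dadarlat} (as Theorem~2.3 there) and used as a black box in the proof of Theorem~\ref{thm:index00}. There is therefore no proof in this paper against which to compare your proposal.

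That said, your sketch is in the spirit of the program the paper signals: the Connes--Gromov--Moscovici almost-flat index theorem of \cite{CGM:flat}, in the form extended in \cite{BB}, upgraded to Hilbert $A$-module coefficients and then paired with $\tau$. You have also correctly isolated the genuine new difficulty: for an arbitrary trace $\tau$ the invariant $\kappa_\tau$ is real-valued, so neither integrality nor the local-constancy trick behind Lemma~\ref{lemma:Kaz} is available, and the equality must be obtained by an honest computation rather than a rigidity argument. Where your outline is thinnest is exactly the step you flag yourself: turning the Mishchenko--Fomenko index of $D_{\mathcal E_\rho}$ into the Chern--Weil integral $\frac{1}{2\pi i}\int_{\Sigma_g}\tau(\Omega_\rho)$, and then into $-\frac{1}{2\pi i}\tau(\log w)$, for a bundle that is only \emph{almost} flat and is defined combinatorially from the CW structure rather than smoothly. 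The appeal to ``Stokes localizes this integral to the holonomy'' hides real work---one needs either an explicit idempotent representative of the index whose $\tau$-trace can be evaluated by hand, or a smoothing/heat-kernel argument with quantitative control in $\ep$. The remark immediately following Theorem~\ref{thm:cc} indicates that the actual computation (and the sign) in \cite{Carrion-Dadarlat} is carried out in Theorem~5.2 of that paper, so if you want to compare your plan to the published argument, that is the place to look.
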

Here $\tau_*:K_0(A)\to \R$ is the homomorphism induced by $\tau$.
\begin{remark}
  The formula \eqref{eq:ind-surfc} was stated in \cite{Carrion-Dadarlat} without the negative sign. This was due to an inadvertent omission of the sign in the statement of Theorem 5.2 from \cite{Carrion-Dadarlat}, even though the correct sign was obtained in its proof.
\end{remark}

We are going to show that the formulae \eqref{eq:ind-surf}, \eqref{eq:ind-surfc} can be generalized to  arbitrary countable discrete groups, as stated in  Theorem~\ref{thm:index00}.

For a connected pointed CW complex $X$ there is a natural homomorphism $\beta^X:H_2(X,\Z)\to RK_0(X)$ which is a rational right inverse of  the Chern character in the sense that: $(ch_2\otimes \mathrm{id}_\Q)\circ (\beta^X\otimes \mathrm{id}_\Q)=\mathrm{id}_{H_2(X,\Q)}$ and hence it is rationally injective, see \cite{Bettaieb-Matthey-Valette} and
\cite{MR1951251}. The map $\beta^X$ is defined by composing the isomorphisms $H_2(X,\Z)\cong H_2(X^{(3)},\Z) \cong RK_0(X^{(3)})$
with the map $RK_0(X^{(3)})\to RK_0(X)$ induced by the inclusion of the 3-skeleton $X^{(3)}\hookrightarrow X$.

Let $\G$ be a countable discrete group.
For $X=B\G$, we denote by $\beta^{\G}$ the corresponding (rationally injective) homomorphism, \cite{MR1951251}, $$\beta^{\G}:  H_2(\G,\Z) \cong H_2(B\G,\Z) \to RK_0(B\G).$$
 Consider the map $\alpha^\G : H_2(\G,\Z) \to K_0(\ell^1(\G))$ defined by $\alpha^\G=\mu_1^\G  \circ \beta^\G$:
   \[\xymatrix{
\alpha^\G :H_2(\G,\Z)\ar[r]^{\beta^\G}& RK_0(B\G) \ar[r]^{\mu_1^\G}&K_0(\ell^1(\G))
}
\]
Chose a free resolution of $\G$:
\begin{equation}\label{eq:resol}
0 \to R \to F \stackrel{q}{\longrightarrow} \G \to 0, \quad q(a)=\bar{a},
\end{equation}
where $F$ and $R$ are free groups.
By Hopf's formula \cite{Brown:book-cohomology}, $$H_2(\G,\Z)=\frac{R\cap [F,F]}{[R,F]}.$$
Thus each element $x \in H_2(\G,\Z)$ is represented by a product of commutators, $\prod_{i=1}^{g} [a_i,b_i]$
 with $a_i,b_i \in F$ for some integer $g\geq 1$ and such that  $\prod_{i=1}^{g} [\bar{a}_i,\bar{b}_i]=1$.

\vskip 8pt
\centerline{\emph{Proof of Theorem~\ref{thm:index00}}}
\begin{proof}
We shall prove  only \eqref{eq:index}. The proof of \eqref{eq:indexc} is entirely similar except that one uses Theorem~\ref{thm:cc} instead of  Theorem~\ref{thm:bb} and $\kappa_\tau$ instead of $\kappa$.

Let $x\in H_2(\G,\Z)$ be represented by  a product of commutators
 $\prod_{i=1}^{g} [a_i,b_i]$ with $a_i,b_i \in F$ with $F$ as in \eqref{eq:resol}.
Let us recall that in the case of surface groups $\G_g$, with  resolution
\begin{equation}\label{eq:resolg}
0 \to R_{2g} \to F_{2g} \stackrel{q}{\longrightarrow} \G_g \to 0,
\end{equation}
it was shown in \cite[2.2.4]{Loday} that under the isomorphism
 $$H_2(\Sigma_g,\Z)\cong H_2(\G_g,\Z)=\frac{R_{2g}\cap [F_{2g},F_{2g}]}{[R_{2g},F_{2g}]},$$
 the fundamental class $[\Sigma_g]$ of $H_2(\Sigma_g,\Z)$ corresponds to the element $-x_g\in H_2(\G_g,\Z)$ where $x_g$ is the class of $\prod_{i=1}^g [s_i,t_i].$
 Following {Loday}, we
consider the homomorphism $F_{2g} \to F$ which maps $s_i$ to $a_i$ and $t_i$ to $b_i$. This induces an homomorphism $f:\G_g \to \G$ such that $f(\bar{s}_i)=\bar{a}_i$ and $f(\bar{t}_i)=\bar{b}_i$, $i=1,...,g$ and the corresponding map
$Bf:B\G_g \to B\G$. We make the identification $\Sigma_g=B\G_g$.
 If $[\Sigma_g]$ denotes the fundamental class of $H_2(\Sigma_g;\Z)$
then $\beta^{\Sigma_g}([\Sigma_g])=[\Sigma_g]_K$, see \cite[p.324]{MR2041902}.
From the previous discussion  we then obtain $\beta^{\G_g}(x_g)=-[\Sigma_g]_K$ and hence we can rewrite equation \eqref{eq:ind-surf} as
\begin{equation}\label{eq:ind-surff}
\rho_\sharp( \alpha^{\G_g}(x_g))=\kappa\left(\prod_{i=1}^{g} [\rho(\bar{s}_i),\rho(\bar{t}_i)] \right).
\end{equation}

 By naturality of $\beta$, \cite{MR2041902}  and $\mu$, \cite{BCH}, \cite{Lafforgue}, the following diagram is commutative.
  \[\xymatrix{
H_2(\G_g,\Z)\ar[r]^{\alpha^{\G_g}} \ar[d]_{f_*}& K_0(\ell^1(\G_g))\ar[d]_{f_*}\ar[dr]^{(\pi \circ f)_\sharp}&
\\
H_2(\G,\Z)\ar[r]^{\alpha^\G}& K_0(\ell^1(\G))\ar[r]_-{\pi_\sharp}& \Z
}
\]
Since $x_g$ is the generator of $H_2(\G_g,\Z)$ given by the product $\prod_{i=1}^g [s_i,t_i]$,  it follows that $f_*(x_g)=x$.
By fixing representatives of the relevant $K$-theory classes and by choosing $S$ sufficiently large and $\ep$ sufficiently small we may arrange that $\pi_\sharp(f_*(y))=(\pi \circ f)_\sharp(y)$
for finitely many elements $y\in K_0(\ell^1(\G_g))$ and in particular for $y=\alpha^{\G_g}(x_g)$.
 Thus:
\begin{equation}\label{eqn:long}
\pi_\sharp(\alpha^\G(x))=\pi_\sharp(\alpha^\G(f_*(x_g)))=\pi_\sharp(f_*(\alpha^{\G_g}(x_g)))=(\pi \circ f)_\sharp(\alpha^{\G_g}(x_g)).\end{equation}
On the other hand, the formula~\eqref{eq:ind-surff} applied for the quasi-representation $\rho=\pi\circ f:\Gamma_g \to U(n)$ implies  that
  \begin{equation}\label{eqn:longg}
(\pi \circ f)_\sharp(\alpha^{\G_g}(x_g))=
\kappa \left(\prod_{i=1}^{g} [\pi(f(\bar{s}_i)),\pi(f(\bar{t}_i))] \right)
\end{equation}
Since $f(\bar{s}_i)=\bar{a}_i$ and $f(\bar{t}_i)=\bar{b}_i$ we obtain from \eqref{eqn:long} and  \eqref{eqn:longg} that
  \begin{equation*}
\pi_\sharp(\alpha^\G(x))=
  \kappa\left(\prod_{i=1}^{g}[\pi(\bar{a}_i),\pi(\bar{b}_i)] \right).\end{equation*}
\end{proof}
\begin{remark}\label{rem:2.5}
The integer $\mathrm{wn}\det \left((1-t)1_n+t\prod_{i=1}^{g} [\pi(\bar{a}_i),\pi(\bar{b}_i)]\right)$ depends only on  the class $x$
  of $\prod_{i=1}^{g} [a_i,b_i]$ in $H_2(\G,\Z)$. This means that if we represent $x$ by a different product of commutators, $\prod_{i=1}^{g'} [a'_i,b'_i]$, then
  \[\kappa\left(\prod_{i=1}^{g}[\pi(\bar{a}_i),\pi(\bar{b}_i)] \right)=\kappa\left(\prod_{i=1}^{g'}[\pi(\bar{a}'_i),\pi(\bar{b}'_i)] \right)\]
  for all sufficiently multiplicative quasi-representations $\pi,$ since both this integers are equal to $\pi_\sharp(\alpha^\G(x))$ by equation~\ref{eq:index}.
\end{remark}
\section{Quasi-representations with nontrivial invariants}
Our next goal is to exhibit classes of groups that admit quasi-representations $\pi$ for which the invariants from Theorem~\ref{thm:index00} do not vanish.
This is addressed in Theorem~\ref{thm:2}.

Let $\G$ be a discrete countable group. Let $\QQ$ be the universal UHF-algebra, $\QQ\cong \bigotimes_{n\geq 1} M_n(\C)$. Consider the natural  pairing $$KK(\C,C^*(\G))\times KK(C^*(\G),\QQ) \to KK(\C,\QQ)\cong \Q,$$ given by
$(x,y)\mapsto x\otimes_{C^*(\G)} y$. Consider  the full assembly map $\mu:RK_0(B\G)\to K_0(C^*(\G))$ and the dual assembly map with rational coefficients
$\nu: KK(C^*(\G),\QQ) \to RK^0(B\G,\Q)$, \cite{Kas:inv}, \cite{Kasparov:conspectus}. For each finite CW complex $Y \subset B\G$, let $\nu_Y: KK(C^*(\G),\QQ) \to RK^0(B\G,\Q) \to K^0(Y,\Q)$ be the composition of $\nu$ with the restriction map  $RK^0(B\G,\Q)\to K^0(Y,\Q)$.
Let $\mu_Y:K_0(Y)\to K_0(C^*(\G))$ be the composition of $\mu$ with $K_0(Y)\to K_0(B\G)$.
By \cite[6.2]{Kas:inv} these maps satisfy the identity:
\begin{equation}\label{eq;duality}
  \nu_Y(y)\otimes_{C(Y)} z = \mu_Y(z)\otimes_{C^*(\G)} y
\end{equation}
for all $z\in K_0(Y)$ and $y\in KK(C^*(\G),\QQ)$.

If $B\G$ is written as the union of an increasing sequence
$(Y_i)_{i}$ of finite CW complexes, then as explained in the proof of Lemma 3.4 from \cite{Kasparov-Skandalis-kk}, there is a Milnor $\varprojlim^1$ exact sequence which implies that
\begin{equation}\label{eq:milnor}
RK^0(B\G;\Q)\cong \varprojlim K^0(Y_i;\Q).
\end{equation}
We denote $\nu_{Y_i}$ by $\nu_i$ and $\mu_{Y_i}$ by $\mu_i$.
On the other hand $RK_0(B\G)=\varinjlim K_0(Y_i)$ and $\mu$ is just the limit of the compatible maps $\mu_i:K_0(Y_i)\to K_0(C^*(\G))$.
Using \eqref{eq;duality}
  we deduce that the following diagram is commutative
\begin{equation}\label{diag:i}\xymatrix{
KK(C^*(\G),\QQ)\ar@{->}[d]_{\nu_i}\ar[r]& \Hom(K_0(C^*(\G)),\Q)\ar@{->}[d]^{\mu_i^*}
\\
{RK^0(Y_i;\Q)}\ar@{->}[r]_-{\delta_i}  &\Hom(RK_0(Y_i),\Q)
}
\end{equation}
where
the horizontal arrows correspond to natural pairings of K-theory with
K-homology.

Let $\EG$ be the classifying space for proper actions of $\G$, \cite{BCH}. It is known that $\EG$ admits a locally compact model, \cite{Kasparov-Skandalis-Annals}.
Let us recall  that $\G$ has a $\gamma$-element if there exists a $\G-C_0(\EG)$-algebra $A$ in the sense of Kasparov \cite{Kas:inv} and two elements $d\in KK_\G(A,\C)$ and $\eta\in KK_G(\C,A)$ (called Dirac and dual-Dirac elements, respectively) such that the element $\gamma=\eta\otimes_Ad\in KK_\G(\C,\C)$ has the property that $p^*(\gamma)=1\in \mathcal{R}KK_G(\EG;C_0(\EG),C_0(\EG))$ where
$p:\EG \to \text{point}$, \cite{Tu:gamma}. We refer the reader to \cite{Kas:inv} for the definitions and the basic properties of these groups. The groups which are coarsely embeddable in a Hilbert space admit a $\gamma$-element, \cite{Tu:gamma}. The class of groups which are coarsely embeddable in a Hilbert space
include the amenable groups, the exact (boundary amenable) groups, the linear groups and the hyperbolic groups.
\begin{proposition}\label{prop:diag}
  Suppose that $\G$ has a $\gamma$-element. Then for any homomorphism $h: K_0(C^*(\G)) \to \Q$ there is $y\in KK(C^*(\G),\QQ)$ such that
  $h(\mu(z))=\mu(z)\otimes_{C^*(\G)} y$ for all $z\in RK_0(B\G)$.
\end{proposition}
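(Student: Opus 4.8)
The statement is exactly a surjectivity/lifting problem for the dual assembly map, packaged through the duality diagram \eqref{diag:i}. The plan is: (a) observe that for a finite skeleton $Y_i$ the map $\delta_i$ in \eqref{diag:i} is a rational isomorphism, so that $h$ determines a canonical class $c\in RK^0(B\G;\Q)$; (b) use the $\gamma$-element to lift $c$ to a class $y\in KK(C^*(\G),\QQ)$ along $\nu$; (c) read off the desired identity from the duality \eqref{eq;duality}. For step (a): since $Y_i$ is a finite CW complex, $RK_0(Y_i)=K_0(Y_i)$ is finitely generated and $RK^0(Y_i;\Q)=K^0(Y_i;\Q)$; by the universal coefficient theorem (or the Chern character and rational Poincaré duality) the pairing $K^0(Y_i;\Q)\times K_0(Y_i;\Q)\to\Q$ is perfect, so $\delta_i\colon RK^0(Y_i;\Q)\to\Hom(RK_0(Y_i),\Q)$ is an isomorphism, and these isomorphisms are compatible with the restriction maps $RK^0(Y_{i+1};\Q)\to RK^0(Y_i;\Q)$ and the transposes of the inclusion-induced maps $K_0(Y_i)\to K_0(Y_{i+1})$, by naturality of the $K$-theory/$K$-homology pairing.

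\textbf{Producing $c$.} Given $h\colon K_0(C^*(\G))\to\Q$, put $h_i:=\mu_i^*(h)=h\circ\mu_i\in\Hom(K_0(Y_i),\Q)$. Since $\mu_i$ factors through $\mu_{i+1}$ and the inclusion $Y_i\hookrightarrow Y_{i+1}$, the system $(h_i)_i$ is compatible, and hence so is the system $c_i:=\delta_i^{-1}(h_i)\in RK^0(Y_i;\Q)$. By the Milnor sequence \eqref{eq:milnor}, $RK^0(B\G;\Q)\cong\varprojlim_i RK^0(Y_i;\Q)$, so there is $c\in RK^0(B\G;\Q)$ restricting to $c_i$ for every $i$; by construction $\delta_i(c_i)=h_i=\mu_i^*(h)$ for all $i$.

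\textbf{The $\gamma$-element.} This is the crucial step and the only place the hypothesis enters. Because $\G$ has a $\gamma$-element, the Dirac--dual-Dirac method of Kasparov yields a one-sided inverse of the assembly map $\mu$; dualizing (and using \eqref{eq;duality}) this says that the dual assembly map $\nu\colon KK(C^*(\G),\QQ)\to RK^0(B\G;\Q)$ is split surjective. Concretely, the dual-Dirac element $\eta\in KK_\G(\C,A)$, with $A$ a proper $\G$-$C_0(\EG)$-algebra, admits a Kasparov descent into $KK(C^*(\G),A\rtimes\G)$ (available for the full crossed product since $A$ is proper), and combining this with the identification $K_*(A\rtimes\G)\cong RK_*^\G(\EG;A)$, the Dirac element $d\in KK_\G(A,\C)$, and the relation $p^*(\gamma)=1\in\CR KK_\G(\EG;C_0(\EG),C_0(\EG))$ produces a map $RK^0(B\G;\Q)\to KK(C^*(\G),\QQ)$ that is a right inverse of $\nu$; the coefficient algebra $\QQ$, with $K_0(\QQ)\cong\Q$, is precisely what allows rational $K$-cohomology classes of $B\G$ to be realized by $KK$-classes of $C^*(\G)$. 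Since $\nu$ composed with the $i$-th restriction equals $\nu_i$, choose $y\in KK(C^*(\G),\QQ)$ with $\nu(y)=c$, i.e. $\nu_i(y)=c_i$ for every $i$.

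\textbf{Verification.} Let $z\in RK_0(B\G)$. As $RK_0(B\G)=\varinjlim_i K_0(Y_i)$, there is an $i$ and $z_i\in K_0(Y_i)$ mapping to $z$, so $\mu(z)=\mu_i(z_i)$. Then, using \eqref{eq;duality}, that the horizontal maps in \eqref{diag:i} are the pairings, and the identities of the previous steps,
\[
\mu(z)\otimes_{C^*(\G)} y=\mu_i(z_i)\otimes_{C^*(\G)} y=\nu_i(y)\otimes_{C(Y_i)} z_i=c_i\otimes_{C(Y_i)} z_i=\delta_i(c_i)(z_i)=h_i(z_i)=h(\mu_i(z_i))=h(\mu(z)),
\]
which is the asserted formula. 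The one nontrivial ingredient, and the main obstacle, is the surjectivity of $\nu$ in the third paragraph: steps one, two and four are formal manipulations of the duality diagram \eqref{diag:i} and the Milnor sequence \eqref{eq:milnor}.
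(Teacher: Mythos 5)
Your proof is correct and takes essentially the same route as the paper: the paper also works with the duality diagram \eqref{diag:i}, produces a class in $RK^0(B\G;\Q)$ mapping to $h\circ\mu$ via the surjectivity of $\delta$ (cited from Lemma 3.4 of \cite{Kasparov-Skandalis-kk}), lifts it along $\nu$ using the surjectivity of $\nu$ under the $\gamma$-hypothesis (cited as Kasparov's result via \cite[Cor.~4.2]{CCC}), and then verifies the identity at a finite stage $Y_i$ through \eqref{eq;duality}, exactly as in your last paragraph. The only differences are presentational: you re-derive the surjectivity of $\delta$ by hand from the perfect rational pairing on the finite skeleta together with \eqref{eq:milnor}, and you sketch the Dirac--dual-Dirac mechanism behind the surjectivity of $\nu$ where the paper simply cites it.
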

\begin{proof}
Since $RK^0(B\G;\Q)\cong \varprojlim K^0(Y_i;\Q)$ and $RK_0(B\G)=\varinjlim K_0(Y_i)$, after passing to limit in \eqref{diag:i},
  we deduce that the following diagram is commutative
\begin{equation}\xymatrix{
KK(C^*(\G),\QQ)\ar@{->>}[d]_{\nu}\ar[r]& \Hom(K_0(C^*(\G)),\Q)\ar@{->>}[d]^{\mu^*}
\\
{RK^0(B\G;\Q)}\ar@{->>}[r]_-\delta  &\Hom(RK_0(B\G),\Q)
}
\end{equation}
The horizontal arrows correspond to natural pairings of K-theory with
K-homology.

The map $\delta$  is surjective by Lemma 3.4 of \cite{Kasparov-Skandalis-kk}. If $\G$ has a $\gamma$-element it is known that the vertical maps are surjective as well. Indeed $\mu$ is rationally injective by \cite{Ska-Tu-Yu:BC}, \cite{Tu:gamma} and hence $\mu^*$ is surjective. For the surjectivity of $\nu$ (due to Kasparov) see \cite[Cor.4.2]{CCC}.
Let $h\in \Hom(K_0(C^*(\G)),\Q)$. Then $h \circ \mu \in \Hom(RK_0(B\G),\Q)$. Since both $\nu$ and $\delta$ are surjective, there is $y\in KK(C^*(\G),\QQ)$ such that $\delta(\nu(y))=h \circ \mu$.

Thus $\delta(\nu(y))=h \circ \mu$ implies that $\delta_i(\nu_i(y))=h \circ \mu_i$ for some $i_0$ and hence for all indices  $i\geq i_0$.
Every $z\in RK_0(B\G)$ is the image of some $z_i\in K_0(Y_i)$ with $i \geq i_0$. It follows from \eqref{eq;duality} that
$$h \circ \mu_i(z_i)=\delta_i(\nu_i(y))(z_i)=\nu_i(y)\otimes_{C(Y_i)} z_i = \mu_i(z_i)\otimes_{C^*(\G)} y, $$ and hence $h(\mu(z))=\mu(z)\otimes_{C^*(\G)} y$. \qedhere
\end{proof}

A countable discrete group $G$ is \emph{quasidiagonal} if it is isomorphic to a subgroup of the unitary group of a quasidiagonal $C^*$-algebra \cite{CCC}. Equivalently, there is a faithful representation $\pi:\G \to U(H)$ on a Hilbert space for which  there is an increasing sequence  $(p_n)_{n}$ of finite dimensional projections which converges strongly to $1_H$ and such that $\lim_{n\to \infty}\|[\pi(s),{p_n}]\|= 0$ for all $s \in \G$.
Thus, a maximally almost periodic group (MAP) is quasidiagonal.
  Amenable groups, or more generally, residually amenable groups are also quasidiagonal as a consequence of \cite{TWW}.

If $\G$ has a $\gamma$-element, then it is known that $\mu^\G$ is rationally injective \cite{Tu:BC} and therefore so is the map $\bar{\alpha}^\G: H_2(\G,\Z) \to K_0(C^*(\G))$ defined by $\bar{\alpha}^\G=\mu^\G \circ \beta^{\G}=j_*\circ \alpha^{\G}$, where $j_*:K_0(\ell^1(\G))\to K_0(C^*(\G))$. We shall use notation as in \eqref{eq:resol}.

\begin{theorem}\label{thm:2}
 Let $\G$ be a quasidiagonal group which admits a $\gamma$-element. Suppose that $x$ is a non-torsion element of $H_2(\G,\Z)$ represented by a product of commutators
 $\prod_{i=1}^{g} [a_i,b_i]$ with $a_i,b_i \in F$ and $\prod_{i=1}^{g} [\bar{a}_i,\bar{b}_i]=1$. Then there is an asymptotic homomorphism $\{\pi_n: \G \to U(k_n)\}_n$
  such that $$\mathrm{wn}\det \left((1-t)1_{k_n}+t\prod_{i=1}^{g} [\pi_n(\bar{a}_i),\pi_n(\bar{b}_i)]\right)\neq 0$$ for all sufficiently large $n$.
\end{theorem}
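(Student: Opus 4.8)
\emph{Proof proposal.} The plan is to reduce the assertion to the nonvanishing of a single $K$-theory invariant, and then to build the required family from a quasidiagonal faithful representation ``twisted'' by a Kasparov element produced by the duality of Section~3. First I would observe that the problem is purely $K$-theoretic. If $\{\pi_n:\G\to U(k_n)\}_n$ is any asymptotic homomorphism, then for each fixed finite $S\subset\G$ and $\ep>0$ the maps $\pi_n$ are $(S,\ep)$-multiplicative for all large $n$. Choosing $S$ and $\ep$ as in Theorem~\ref{thm:index00} for our class $x$ and invoking Lemma~3.3 of \cite{BB}, we get for all large $n$
$$\mathrm{wn}\det\left((1-t)1_{k_n}+t\prod_{i=1}^{g}[\pi_n(\bar{a}_i),\pi_n(\bar{b}_i)]\right)=\pi_{\nsh}(\alpha^\G(x))=\pi_{\nsh}\big(\bar\alpha^\G(x)\big),$$
where $\bar\alpha^\G(x)=j_*(\alpha^\G(x))=\mu^\G(\beta^\G(x))\in K_0(C^*(\G))$ and $\pi_{\nsh}$ depends only on this class. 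Hence it suffices to construct an asymptotic homomorphism $\{\pi_n:\G\to U(k_n)\}_n$ with $k_n\to\infty$ for which $\pi_{\nsh}\big(\bar\alpha^\G(x)\big)\neq 0$ for all large $n$.

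Next I would select the functional to be detected. Since $\G$ has a $\gamma$-element, $\mu^\G$ is rationally injective, hence so is $\bar\alpha^\G=\mu^\G\circ\beta^\G$; as $x$ is non-torsion, $\xi:=\bar\alpha^\G(x)$ is a non-torsion element of $K_0(C^*(\G))$. Because $\Q$ is an injective $\Z$-module, the homomorphism $\Z\xi\to\Q$, $\xi\mapsto 1$, extends to a homomorphism $h:K_0(C^*(\G))\to\Q$ with $h(\xi)=1$. Proposition~\ref{prop:diag} then yields $y\in KK(C^*(\G),\QQ)$ with $h(\mu^\G(z))=\mu^\G(z)\otimes_{C^*(\G)}y$ for all $z\in RK_0(B\G)$; taking $z=\beta^\G(x)$ gives $\xi\otimes_{C^*(\G)}y=h(\xi)=1$.

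The last and essential step realizes $y$ by finite-dimensional quasi-representations, and here I would rely on \cite{CCC}. Fix a faithful quasidiagonal representation $\pi:\G\to U(H)$ together with finite-rank projections $p_n\nearrow 1_H$ such that $\|[\pi(s),p_n]\|\to 0$ for all $s\in\G$, so the compressions $p_n\pi(\cdot)p_n$ form an asymptotic homomorphism extending to a $*$-homomorphism $C^*(\G)\to\prod_n M_{k_n}/\bigoplus_n M_{k_n}$. Twisting this family by a Kasparov module representing $y$ and by the inductive limit structure $\QQ=\varinjlim_k M_{d_k}$ — the construction carried out in \cite{CCC}, where quasidiagonality supplies the finite-dimensional approximations and the $\gamma$-element is used for the surjectivity of the dual assembly map, cf.\ \cite[Cor.~4.2]{CCC} and Proposition~\ref{prop:diag} — one produces an asymptotic homomorphism $\{\pi_n:\G\to U(k_n)\}_n$ with $k_n\to\infty$ such that, writing $\tau_n=\tfrac1{k_n}\mathrm{Tr}$,
$$\lim_{n\to\infty}\tau_n\big(\pi_{\nsh}(w)\big)=w\otimes_{C^*(\G)}y\qquad\text{for all }w\in\operatorname{Im}(\mu^\G)\subset K_0(C^*(\G)).$$
Applying this with $w=\xi=\mu^\G(\beta^\G(x))$ and using $\xi\otimes_{C^*(\G)}y=1$ gives $\tau_n\big(\pi_{\nsh}(\xi)\big)\to 1$; since $k_n\to\infty$, this forces $\big|\pi_{\nsh}(\xi)\big|\to\infty$, so $\pi_{\nsh}\big(\bar\alpha^\G(x)\big)\neq 0$ for all large $n$, and the reduction in the first paragraph finishes the proof.

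I expect the genuine difficulty to lie entirely in extracting the displayed limit-trace identity from \cite{CCC}: one must ensure that the twisted-compression procedure outputs an honest asymptotically multiplicative family of \emph{unitary} quasi-representations of $\G$ (and not merely contractive completely positive maps), that the induced limit trace $\lim_n\tau_n\circ\pi_{\nsh}$ computes the Kasparov pairing with $y$ on $\operatorname{Im}(\mu^\G)$, and that the Kasparov module for $y$, the exhaustion of $\QQ$, and the projections $p_n$ can be threaded together compatibly so that one sequence works simultaneously for all large $n$. Everything preceding that is formal: Theorem~\ref{thm:index00} together with Lemma~3.3 of \cite{BB} for the reduction, rational injectivity of $\bar\alpha^\G$ from the $\gamma$-element, and the injectivity of $\Q$ together with Proposition~\ref{prop:diag} for the construction of $h$ and $y$.
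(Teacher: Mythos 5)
Your first two steps (the reduction via Theorem~\ref{thm:index00} to showing $\pi_{\nsh}(\alpha^\G(x))\neq 0$, and the construction of $h$ with $h(\xi)=1$ and of $y$ via Proposition~\ref{prop:diag}) agree with the paper. The gap is in the third step, which is exactly where the theorem lives, and your proposed mechanism there is not only unproved but cannot work. In the paper, quasidiagonality of $\G$ enters through \cite[Thm.~4.6]{CCC}, which allows one to choose $y$ in the \emph{quasidiagonal} part $KK(C^*(\G),\QQ)_{qd}$ while keeping the duality identity of Proposition~\ref{prop:diag}; such a $y$ is represented by a pair of representations $\varphi,\psi$ admitting an approximate unit of projections asymptotically commuting with both, and the cut-downs give \emph{two} ucp asymptotic morphisms $\varphi^{(0)}_n,\varphi^{(1)}_n$ into matrix subalgebras of $\QQ$ for which, by \cite[Prop.~2.5]{AA}, the fixed nonzero rational number $\bar\alpha^\G(x)\otimes_{C^*(\G)}y$ equals the \emph{unnormalized difference} $(\varphi^{(0)}_n)_\sharp(\bar\alpha^\G(x))-(\varphi^{(1)}_n)_\sharp(\bar\alpha^\G(x))$ for all large $n$; hence one of the two terms is nonzero, and one then perturbs its restriction to $\G$ to a unitary-valued asymptotic homomorphism. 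Your sketch replaces this by compressing a faithful quasidiagonal representation of $\G$ and ``twisting by a Kasparov module for $y$''; no such construction is carried out in \cite{CCC}, and you give no argument for it -- you yourself flag it as the open difficulty.

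Worse, the limit-trace identity you ask of that construction is false for the only class you need it for. If $\{\pi_n\colon\G\to U(k_n)\}_n$ is \emph{any} asymptotic homomorphism, then $w_n:=\prod_{i=1}^{g}[\pi_n(\bar a_i),\pi_n(\bar b_i)]\to 1$ in norm, because $\prod_{i=1}^{g}[\bar a_i,\bar b_i]=1$ in $\G$. By Theorem~\ref{thm:index00}, for large $n$ one has $\pi_{\nsh}(\alpha^\G(x))=\frac{1}{2\pi i}\mathrm{Tr}(\log w_n)$, and the eigenvalues of $\frac{1}{2\pi i}\log w_n$ have absolute value at most $\frac{1}{\pi}\arcsin\left(\tfrac{1}{2}\|w_n-1\|\right)$, so $|\pi_{\nsh}(\alpha^\G(x))|\leq \frac{k_n}{\pi}\arcsin\left(\tfrac{1}{2}\|w_n-1\|\right)=o(k_n)$. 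Consequently $\tau_n\bigl(\pi_{\nsh}(\xi)\bigr)=\frac{1}{k_n}\pi_{\nsh}(\alpha^\G(x))\to 0$ for \emph{every} asymptotic homomorphism, so no sequence can satisfy $\lim_n\tau_n(\pi_{\nsh}(\xi))=\xi\otimes_{C^*(\G)}y=1$, and the step ``since $k_n\to\infty$ this forces $|\pi_{\nsh}(\xi)|\to\infty$'' can never be reached. Normalized traces cannot detect the nonvanishing here; the detection must go through the unnormalized pairing realized as a difference of two pushforwards, as in the paper's proof.
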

\begin{proof}
 Let us recall that $\alpha^\G=\mu_1^\G\circ\beta^\G$ and $\bar{\alpha}^\G=\mu^\G\circ\beta^\G$. By Theorem~\ref{thm:index00} it suffices to find $(\pi_n)_n$ such that $(\pi_n)_\sharp (\alpha^\G(x))\neq 0$  for all sufficiently large $n$. We claim that  it suffices to find a unital completely positive (ucp) asymptotic morphism $\{\psi_n: C^*(\G )\to M_{k_n}\}_n$ such that $(\psi_n)_\sharp (\bar{\alpha}^\G(x))\neq 0$  for all sufficiently large $n$. Indeed, by functional calculus one can perturb the restriction to $\G$ of each $\psi_n$ to a unital map $\pi_n:\G\to U(k_n)$ such that $\lim_n \|\pi_n(s)-\psi_n(s)\|=0$ for all $s\in \G$. Then the  asymptotic homomorphism $\{\pi_n: \G \to U(k_n)\}_n$ induces  $*$-homomorphisms $\bm{\pi}: \ell^1(\G) \to \prod_n M_{k_n}/\bigoplus_n M_{k_n}$ and $\bm{\underline{\pi}}:C^*(\G)\to \prod_n M_{k_n}/\bigoplus_n M_{k_n}$ with
 $j\circ \bm{\underline{\pi}}=\bm{\pi}$ such that $\bm{\underline{\pi}}$ is equal to the $*$-homomorphism induced by $\{\psi_n\}_n$.
 It follows that  $(\pi_n)_\sharp ({\alpha}^\G(x))=(\psi_n)_\sharp (\bar{\alpha}^\G(x))\neq 0$ for all sufficiently large $n$.

  Since $x$ is a non-torsion element  and since $\bar{\alpha}^\G$ is a composition of rationally injective maps ($\G$ has a $\gamma$-element),
 there is $h: K_0(C^*(\G)) \to \Q$ such that $h(\bar{\alpha}^\G(x))\neq 0$.
  Since $\G$ has a $\gamma$-element and it is quasidiagonal, it follows by \cite[Thm.4.6]{CCC} that
 \(\nu(KK(C^*(\G),\mathcal{Q})_{qd})= \nu(KK(C^*(\G),\mathcal{Q}))=RK^0(B\G;\Q)  \). Therefore in the proof of Proposition~\ref{prop:diag} we can choose $y\in KK(C^*(\G),\mathcal{Q})_{qd}$ such that
$h(\mu(z))=\mu(z)\otimes_{C^*(\G)} y$ for all $z\in RK_0(B\G)$. In particular, we obtain that
$h(\bar{\alpha}^\G(x))=\bar{\alpha}^\G(x)\otimes_{C^*(\G)} y \neq 0.$
Since $y\in KK(C^*(G),\QQ)_{qd}$, $y$ is represented  by  a pair of nonzero $*$-representations $\varphi,\psi:C^*(\G)\to M(K(H)\otimes \QQ)$,  such that $\varphi(a)-\psi(a)\in K(H)\otimes \QQ$,
 $a\in C^*(\G)$, and with property that
 there is an increasing approximate unit $(p_n)_n$ of $K(H)$ consisting of projections such that $(p_n\otimes 1_\QQ)_n$ commutes asymptotically with both $\varphi(a)$ and $\psi(a)$, for all $a\in C^*(\G)$, see \cite[Def.4.4]{CCC}.
  It is then clear that $\varphi^{(0)}_n=(p_n\otimes 1_\QQ) \varphi(p_n\otimes 1_\QQ)$ and $\varphi^{(1)}_n=(p_n\otimes 1_\QQ) \psi(p_n\otimes 1_\QQ)$
 are contractive completely positive asymptotic homomorphisms from  $C^*(\G)$ to $K(H)\otimes \QQ$. Let $1$ denote the unit of $C^*(\G)$.
It is routine to  further perturb these maps to completely positive asymptotic homomorphisms such that
$\varphi^{(r)}_n(1)$, $r=0,1$, are projections so that we can view this maps as ucp maps into matrix subalgebras of $\QQ$.
By \cite[Prop.2.5]{AA} the Kasparov product $\bar{\alpha}^\G(x)\otimes_{C^*(\G)} y$ can be computed as
\begin{equation}
(\varphi^{(0)}_n)_\sharp(\bar{\alpha}^\G(x))-(\varphi^{(1)}_n)_\sharp(\bar{\alpha}^\G(x))\equiv \bar{\alpha}^\G(x)\otimes_{C^*(\G)} y \neq 0.
\end{equation}
It follows that there is $n_0$ such that for each $n\geq n_0$ there is $r_n\in \{0,1\}$ such that $(\varphi^{(r_n)}_n)_\sharp(\bar{\alpha}^\G(x))$ is nonzero.
Then $\psi_n:=\varphi^{(r_n)}_n$ has the desired properties.
\end{proof}
Any finitely generated linear group $\Gamma$ is residually finite by Malcev's theorem  and exact by \cite{GuenHW} and so it satisfies the hypotheses of Theorem~\ref{thm:2}.
In particular, this is the case  for finitely generated torsion free nilpotent groups \cite{CCC}.

 \begin{corollary}\label{cor:2}
Let $\G$ be a quasidiagonal group which admits a $\gamma$-element and such that $H_2(\G,\Q)\neq 0$.  Then there is an asymptotic homomorphism $\{\pi_n: \G \to U(k_n)\}_n$ for which there exist no genuine representations $\{\pi'_n: \G \to U(k_n)\}_n$
  such that $\lim_{n\to \infty} \|\pi_n(s)-\pi'_n(s)\|=0$ for all $s\in \G.$
\end{corollary}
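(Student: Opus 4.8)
The plan is to argue by contradiction, using Theorem~\ref{thm:2} to manufacture the asymptotic homomorphism and Kazhdan's Lemma~\ref{lemma:Kaz} to obstruct its perturbation to genuine representations. First I would extract a non-torsion homology class: since $H_2(\G,\Q)\cong H_2(\G,\Z)\otimes_\Z\Q$ is nonzero, $H_2(\G,\Z)$ cannot be a torsion group, so it contains an element $x$ of infinite order, which Hopf's formula represents as the class of some $\prod_{i=1}^g[a_i,b_i]$ with $a_i,b_i\in F$ and $\prod_{i=1}^g[\bar a_i,\bar b_i]=1$ in $\G$. Since $\G$ is quasidiagonal and admits a $\gamma$-element, Theorem~\ref{thm:2} applied to this $x$ yields an asymptotic homomorphism $\{\pi_n:\G\to U(k_n)\}_n$ with
\[
\mathrm{wn}\det\Bigl((1-t)1_{k_n}+t\prod_{i=1}^g[\pi_n(\bar a_i),\pi_n(\bar b_i)]\Bigr)\neq0
\]
for all large $n$. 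Each product of commutators $w_n:=\prod_{i=1}^g[\pi_n(\bar a_i),\pi_n(\bar b_i)]$ lies in $SU(k_n)$, and because $\{\pi_n\}$ is asymptotically multiplicative and asymptotically respects inverses one has $w_n\to1_{k_n}$; so for $n$ large the Exel identity $\mathrm{wn}\det((1-t)1_{k_n}+tw_n)=\kappa(w_n)$ recalled above applies, showing that $\kappa(w_n)$ is a nonzero integer eventually.

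Next I would suppose, for contradiction, that there are genuine representations $\pi'_n:\G\to U(k_n)$ with $\|\pi_n(s)-\pi'_n(s)\|\to0$ for all $s\in\G$. Because each $\pi'_n$ is a homomorphism and $\prod_{i=1}^g[\bar a_i,\bar b_i]=1$ in $\G$,
\[
w'_n:=\prod_{i=1}^g[\pi'_n(\bar a_i),\pi'_n(\bar b_i)]=\pi'_n\Bigl(\prod_{i=1}^g[\bar a_i,\bar b_i]\Bigr)=1_{k_n}\qquad\text{for every }n.
\]
On the other hand, for $n$ large all the norm hypotheses of Lemma~\ref{lemma:Kaz} are met with $u_i=\pi_n(\bar a_i)$, $v_i=\pi_n(\bar b_i)$, $u'_i=\pi'_n(\bar a_i)$, $v'_i=\pi'_n(\bar b_i)$ --- namely $\|w_n-1_{k_n}\|<1/5g$ and $\|\pi_n(\bar a_i)-\pi'_n(\bar a_i)\|,\|\pi_n(\bar b_i)-\pi'_n(\bar b_i)\|<1/5g$. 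Since $\kappa(w_n)\neq0$, Lemma~\ref{lemma:Kaz} then forces $w'_n\neq1_{k_n}$, contradicting the displayed equality above. Hence no such family $\{\pi'_n\}$ exists.

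I expect essentially no serious obstacle here: all the substantive work sits in Theorem~\ref{thm:2}, and the corollary merely records that the winding number invariant of Theorem~\ref{thm:index00} is, via Kazhdan's lemma, a \emph{complete} obstruction to perturbing $\{\pi_n\}$ to a genuine representation. The only points requiring (routine) care are deducing a non-torsion element of $H_2(\G,\Z)$ from $H_2(\G,\Q)\neq0$, and checking that $w_n$ lies within $1/5g$ of $1_{k_n}$ for $n$ large --- which holds because an asymptotic homomorphism is eventually $(S,\varepsilon)$-multiplicative for every finite symmetric set $S$ containing the $\bar a_i$, $\bar b_i$, their inverses and the relevant words, and every $\varepsilon>0$.
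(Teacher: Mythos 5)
Your proposal is correct and follows essentially the same route as the paper: apply Theorem~\ref{thm:2} to a non-torsion class $x\in H_2(\G,\Z)$ (which exists since $H_2(\G,\Q)\cong H_2(\G,\Z)\otimes\Q\neq 0$) and then use Lemma~\ref{lemma:Kaz}, together with the fact that a genuine representation sends $\prod_{i=1}^{g}[\bar a_i,\bar b_i]=1$ to $1_{k_n}$ (so its $\kappa$-invariant vanishes), to rule out perturbation to genuine representations. The only difference is that you spell out the routine verifications (that $w_n\to 1_{k_n}$, hence $\kappa(w_n)$ equals the winding number and is eventually a nonzero integer) which the paper leaves implicit.
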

\begin{proof}
  This follows from Theorem~\ref{thm:2} and Lemma \ref{lemma:Kaz} as
   \[\kappa\left(\prod_{i=1}^{g} [\pi'_n(\bar{a}_i),\pi'_n(\bar{b}_i)] \right)=0\]
    for genuine representations $\pi'_n$ of $\G$. A more  general result proved in \cite{CCC} asserts that it suffices to assume the nonvanishing of some $H_{2k}(\G,\Q)$, $k\geq 1$.
\end{proof}

\bibliographystyle{abbrv}
\smaller[1]

\begin{thebibliography}{ABC99}
\bibitem{BCH}
P.~Baum, A.~Connes, and N.~Higson.
\newblock Classifying space for proper actions and {$K$}-theory of group
  {$C^\ast$}-algebras.
\newblock In {\em {$C^\ast$}-algebras: 1943--1993 ({S}an {A}ntonio, {TX},
  1993)}, volume 167 of {\em Contemp. Math.}, pages 240--291. Amer. Math. Soc.,
  Providence, RI, 1994.

\bibitem{Bettaieb-Matthey-Valette}
H.~Bettaieb, M.~Matthey, and A.~Valette.
\newblock Unbounded symmetric operators in {$K$}-homology and the
  {B}aum-{C}onnes conjecture.
\newblock {\em J. Funct. Anal.}, 229(1):184--237, 2005.

\bibitem{Brown:book-cohomology}
K.~S. Brown.
\newblock {\em Cohomology of groups}, volume~87 of {\em Graduate Texts in
  Mathematics}.
\newblock Springer-Verlag, New York, 1994.
\newblock Corrected reprint of the 1982 original.

\bibitem{Carrion-Dadarlat}
J.~R. Carri{\'o}n and M.~Dadarlat.
\newblock Quasi-representations of surface groups.
\newblock {\em J. Lond. Math. Soc. (2)}, 88(2):501--522, 2013.

\bibitem{CGM:flat}
A.~Connes, M.~Gromov, and H.~Moscovici.
\newblock Conjecture de {N}ovikov et fibr\'es presque plats.
\newblock {\em C. R. Acad. Sci. Paris S\'er. I Math.}, 310(5):273--277, 1990.

\bibitem{BB}
M.~Dadarlat.
\newblock Group quasi-representations and index theory.
\newblock {\em J. Topol. Anal.}, 4(3):297--319, 2012.

\bibitem{AA}
M.~Dadarlat.
\newblock Group quasi-representations and almost flat bundles.
\newblock {\em J. Noncommut. Geom.}, 8(1):163--178, 2014.

\bibitem{CCC}
M.~Dadarlat.
\newblock Obstructions to matricial stability of discrete groups and almost
  flat {K}-theory.
\newblock {\em Adv. Math.}, 384:Paper No. 107722, 29, 2021.

\bibitem{ESS-published}
S.~Eilers, T.~Shulman, and A.~P.~W. S{\o}rensen.
\newblock {$C^*$}-stability of discrete groups.
\newblock {\em Adv. Math.}, 373:107324, 41, 2020.

\bibitem{Exel:Pacific}
R.~Exel.
\newblock The soft torus and applications to almost commuting matrices.
\newblock {\em Pacific J. Math.}, 160(2):207--217, 1993.

\bibitem{Exel-Loring:winding}
R.~Exel and T.~Loring.
\newblock Almost commuting unitary matrices.
\newblock {\em Proc. Amer. Math. Soc.}, 106(4):913--915, 1989.

\bibitem{Exel-Loring:inv=}
R.~Exel and T.~A. Loring.
\newblock Invariants of almost commuting unitaries.
\newblock {\em J. Funct. Anal.}, 95(2):364--376, 1991.

\bibitem{GuenHW}
E.~Guentner, N.~Higson, and S.~Weinberger.
\newblock The {N}ovikov conjecture for linear groups.
\newblock {\em Publ. Math. Inst. Hautes \'Etudes Sci.}, (101):243--268, 2005.

\bibitem{Kasparov-Skandalis-Annals}
G.~Kasparov and G.~Skandalis.
\newblock Groups acting properly on ``bolic'' spaces and the {N}ovikov
  conjecture.
\newblock {\em Ann. of Math. (2)}, 158(1):165--206, 2003.

\bibitem{Kas:inv}
G.~G. Kasparov.
\newblock Equivariant {$KK$}-theory and the {N}ovikov conjecture.
\newblock {\em Invent. Math.}, 91(1):147--201, 1988.

\bibitem{Kasparov:conspectus}
G.~G. Kasparov.
\newblock {$K$}-theory, group {$C^*$}-algebras, and higher signatures
  (conspectus).
\newblock In {\em Novikov conjectures, index theorems and rigidity, {V}ol.\ 1
  ({O}berwolfach, 1993)}, volume 226 of {\em London Math. Soc. Lecture Note
  Ser.}, pages 101--146. Cambridge Univ. Press, Cambridge, 1995.

\bibitem{Kasparov-Skandalis-kk}
G.~G. Kasparov and G.~Skandalis.
\newblock Groups acting on buildings, operator {$K$}-theory, and {N}ovikov's
  conjecture.
\newblock {\em $K$-Theory}, 4(4):303--337, 1991.

\bibitem{Kazhdan-epsilon}
D.~Kazhdan.
\newblock On {$\varepsilon $}-representations.
\newblock {\em Israel J. Math.}, 43(4):315--323, 1982.

\bibitem{Lafforgue}
V.~Lafforgue.
\newblock {$K$}-th\'eorie bivariante pour les alg\`ebres de {B}anach et
  conjecture de {B}aum-{C}onnes.
\newblock {\em Invent. Math.}, 149(1):1--95, 2002.

\bibitem{Loday}
J.-L. Loday.
\newblock {$K$}-th\'{e}orie alg\'{e}brique et repr\'{e}sentations de groupes.
\newblock {\em Ann. Sci. \'{E}cole Norm. Sup. (4)}, 9(3):309--377, 1976.

\bibitem{Loring:k(uv)-paper}
T.~A. Loring.
\newblock {$K$}-theory and asymptotically commuting matrices.
\newblock {\em Canad. J. Math.}, 40(1):197--216, 1988.

\bibitem{MR1951251}
M.~Matthey.
\newblock Mapping the homology of a group to the {$K$}-theory of its
  {$C^*$}-algebra.
\newblock {\em Illinois J. Math.}, 46(3):953--977, 2002.

\bibitem{MR2041902}
M.~Matthey.
\newblock The {B}aum-{C}onnes assembly map, delocalization and the {C}hern
  character.
\newblock {\em Adv. Math.}, 183(2):316--379, 2004.

\bibitem{Ska-Tu-Yu:BC}
G.~Skandalis, J.~L. Tu, and G.~Yu.
\newblock The coarse {B}aum-{C}onnes conjecture and groupoids.
\newblock {\em Topology}, 41(4):807--834, 2002.

\bibitem{TWW}
A.~Tikuisis, S.~White, and W.~Winter.
\newblock Quasidiagonality of nuclear {$C^\ast$}-algebras.
\newblock {\em Ann. of Math. (2)}, 185(1):229--284, 2017.

\bibitem{Tu:BC}
J.-L. Tu.
\newblock La conjecture de {B}aum-{C}onnes pour les feuilletages moyennables.
\newblock {\em $K$-Theory}, 17(3):215--264, 1999.

\bibitem{Tu:gamma}
J.-L. Tu.
\newblock The gamma element for groups which admit a uniform embedding into
  {H}ilbert space.
\newblock In {\em Recent advances in operator theory, operator algebras, and
  their applications}, volume 153 of {\em Oper. Theory Adv. Appl.}, pages
  271--286. Birkh\"auser, Basel, 2005.

\bibitem{Voi:unitaries}
D.~Voiculescu.
\newblock {Asymptotically commuting finite rank unitary operators without
  commuting approximats}.
\newblock {\em Acta Sci. Math. (Szeged)}, 45:429--431, 1983.

\end{thebibliography}

\end{document}